\numberwithin{equation}{section} 
\begin{document}

 \PageNum{1}
 \Volume{201x}{Sep.}{x}{x}
 \OnlineTime{xxx xx, 201x}
 \DOI{0000000000000000}

\abovedisplayskip 6pt plus 2pt minus 2pt \belowdisplayskip 6pt
plus 2pt minus 2pt
\def\vsp{\vspace{1mm}}
\def\th#1{\vspace{1mm}\noindent{\bf #1}\quad}
\def\proof{\vspace{1mm}\noindent{\it Proof}\quad}
\def\no{\nonumber}
\newenvironment{prof}[1][Proof]{\noindent\textit{#1}\quad }
{\hfill $\Box$\vspace{0.7mm}}
\def\q{\quad} \def\qq{\qquad}
\allowdisplaybreaks[4]


\AuthorMark{Cui Z., Li H. Z. and Xue B. Q.}                             

\TitleMark{On sums of sparse prime subsets}  

\title{On sums of sparse prime subsets     
\footnote{This work is supported by the National Natural Science Foundation of China (Grant No. 10771135, 11271249) and Innovation Program of Shanghai Municipal Education Commission.}}                  

\author{Zhen CUI \quad Hong Ze LI \quad Bo Qing XUE*\footnote{* The third author is the corresponding author.}}             
    {Department of Mathematics, Shanghai Jiao Tong University, Shanghai 200240, P. R. China\\
    E-mail\,$:$ zcui@sjtu.edu.cn \quad lihz@sjtu.edu.cn \quad ericxue68@sjtu.edu.cn}

\maketitle%

\Abstract{For arbitrary $c_0>0$, if $A$ is a subset of the primes less than $x$ with cardinality $\delta x (\log x)^{-1}$ with $\delta\geq (\log x)^{-c_0}$, then there exists a positive constant $c$ such that the cardinality of $A+A$ is larger than $c\, \delta x (\log\log x)^{-1}$.}      

\Keywords{Density, sumsets, subsets of the primes, circle method, sifted numbers}        

\MRSubClass{11P32, 11P55, 11B05, 11B13}      

\section{Introduction}

The Goldbach conjecture is one of the oldest unsolved problems in number theory. The ternary case was basically solved by Vinogradov, showing that every sufficiently large odd integer can be expressed as the sum of three primes. For the binary Goldbach problem, people can only get `almost all' results. For example, Lu \cite{Lu} proved that the number of even integers $n$ not exceeding $x$ for which $n$ is not the sum of two primes is $\textit{O}(x^{0.879})$.

On the other hand, additive properties of the primes have been widely studied in recent years. Van der Corput \cite{Cor} showed that the primes contain infinitely many three-term arithmetic progressions. Green \cite{G} proved that three-term arithmetic progression exists in subsets of the primes with positive relative density. And a celebrated work by Green and Tao \cite{G-T} proved that the primes contain arbitrarily long arithmetic progressions.

Combining these two kinds of problems, one may wish to find properties of $A+A$, with $A$ a subset of the primes. Using `W-trick', the strategy developed by Green-Tao, Chipeniuk and Hamel \cite{C-H} showed that if $A$ is a subset of the primes with positive relative lower density $\delta$, then the set $A+A$ has positive lower density at least
\begin{equation}
\displaystyle C_1\delta e^{-C_2(\log(1/\delta)^{2/3}(\log\log(1/\delta))^{1/3})} \label{boundofCH}
\end{equation}
in the natural numbers. Actually, Ramar\'{e} and Ruzsa \cite{R-R} studied this problem before. They gained general results for subsets of `sifted sequence'. An explicit theorem can be found in \cite{Rama}, showing that the bound in (\ref{boundofCH}) can be replaced by
\begin{displaymath}
C_3\delta/\log\log(1/\delta)
\end{displaymath}
with $C_3$ an absolute constant. Recently, Matom\"{a}ki \cite{Mato} obtained an explicit value of the constant $C_3$ in above estimate.

Now we turn to sparser subsets of the primes. Chipeniuk and Hamel remarked that it is possible to obtain a bound of $\delta^2$ using simple argument involving the Cauchy-Schwarz inequality and Brun's sieve. In this paper, we step a little further.  Let $\mathcal{P}$ be the set of all the primes.

\begin{theorem} \label{TH}
Let $c_0>0$ be arbitrarily real number. Suppose $x$ is sufficiently large. Then for any prime subset $A\subseteq \mathcal{P}\cap [1,x]$ satisfying $|A|= \delta x (\log x)^{-1}$ with $\delta \geq (\log x)^{-c_0}$, we have
\begin{displaymath}
|A+A|\gg_{c_0} \delta x(\log\log x)^{-1}.
\end{displaymath}
\end{theorem}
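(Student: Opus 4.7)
The plan is to extend the Ramar\'e--Ruzsa framework for sifted sequences into the sparse regime $\delta\geq(\log x)^{-c_0}$, where $\delta$ may tend to zero with $x$ and the usual structural arguments no longer apply directly. Writing $r(n)=|\{(a,b)\in A^2:a+b=n\}|$, the naive inequality $|A+A|\geq|A|^2/\max_n r(n)$ combined with the Brun--Selberg sieve bound $r(n)\leq C\mathfrak{S}(n)\,x(\log x)^{-2}$ and the worst-case estimate $\max_n\mathfrak{S}(n)\ll\log\log x$ only yields $|A+A|\gg\delta^2 x(\log\log x)^{-1}$, short of the target by a factor of $\delta$. Recovering this missing factor is the whole substance of the theorem.

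To do so, I would invoke the Hardy--Littlewood circle method on a sifted subset. Fix a sieve parameter $w=(\log x)^{C(c_0)}$ and set $W=\prod_{p\leq w}p$; after discarding the $O(w/\log w)$ primes in $A$ of size at most $w$, the remainder decomposes as $A=\bigsqcup_{(b,W)=1}A_b$ along residue classes coprime to $W$. Put $f_A(\theta)=\sum_{a\in A}e(a\theta)$. By Cauchy--Schwarz, the target reduces to the additive-energy bound $E(A)=\int_0^1|f_A(\theta)|^4\,d\theta\ll|A|^3(\log\log x)/\log x$. I would split this integral into major arcs around rationals $a/q$ with $q\leq w^{O(1)}$ and a minor-arc complement. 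On the minor arcs one controls $f_A$ via the sifted majorant $|f_A|\leq|f_{S_w}|$, where $S_w=\{n\leq x:(n,W)=1\}$, together with Vinogradov--Vaughan type exponential-sum estimates and Parseval; on the major arcs one appeals to the Bombieri--Vinogradov theorem combined with the residue-class decomposition modulo $W$.

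The main obstacle, as I see it, is the major-arc analysis, because $A$ is an arbitrary sparse subset and need not be equidistributed in progressions. My plan would be to bound $|f_A(a/q)|$ via the triangle inequality by $\sum_{(r,q)=1}|A_r|$, apply Cauchy--Schwarz across the $\phi(q)$ residue classes, and invoke Brun--Titchmarsh in the form $|A_r|\ll x/(\phi(q)\log x)$ to extract a saving proportional to $\phi(W)/W$ at each arc. Summing the resulting contributions over major arcs of denominator $q\leq w^{O(1)}$ and optimizing $w$ against the hypothesis $\delta\geq(\log x)^{-c_0}$, the total major-arc contribution to $E(A)$ should be dominated by $|A|^3(\log\log x)/\log x$, which then yields $|A+A|\gg_{c_0}\delta x(\log\log x)^{-1}$. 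The residual $\log\log x$ in the denominator reflects the worst-case $\max_n\mathfrak{S}(n)\ll\log\log x$ inherited from the sieve majorant, while the dependence on $c_0$ enters through the admissible choice of $w$ in balancing major and minor arc estimates.
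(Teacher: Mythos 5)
Your overall target is the right one (the additive--energy bound, which via Cauchy--Schwarz gives $|A+A|\gg |A|^4/E(A)$), and the paper also works with the circle method and with numbers free of small prime factors. But the two mechanisms you propose for bounding $E(A)=\int_0^1|f_A|^4$ both fail. First, the minor-arc step rests on the claimed pointwise majorization $|f_A(\theta)|\leq |f_{S_w}(\theta)|$; this is false. The inclusion $A\subseteq S_w$ majorizes characteristic functions, $1_A(n)\leq 1_{S_w}(n)$, but exponential sums of a subset are not dominated pointwise by those of the superset: at a frequency where $f_{S_w}$ exhibits cancellation, $f_A$ can be of size $|A|$ (this is exactly why transference/restriction machinery exists). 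Second, the major-arc step gives no saving: the triangle inequality yields $|f_A(a/q)|\leq\sum_{(r,q)=1}|A_r|=|A|$ exactly, and inserting Brun--Titchmarsh after Cauchy--Schwarz across residue classes produces a bound of size $(x|A|/\log x)^{1/2}$, which is \emph{larger} than the trivial bound $|A|$ in your range of $\delta$. Indeed no pointwise saving at rationals is possible for an arbitrary sparse $A$: if $A$ lies entirely in one residue class modulo $q$ then $|f_A(a/q)|=|A|$. Likewise Bombieri--Vinogradov concerns the distribution of all primes and says nothing about an arbitrary subset $A$, so it cannot rescue the major arcs.

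The missing idea, which is the pivot of the paper's proof, is never to estimate $f_A$ (equivalently $S_A$) on any arc at all. By positivity of the underlying counts one has $\int_0^1|S_A|^4\,d\alpha\leq\int_0^1 S(\alpha)S_A(\alpha)S_A^2(-\alpha)\,d\alpha$, i.e.\ only \emph{one} copy of $S_A$ is replaced, by the full weighted prime sum $S$. All arc estimates are then applied to $S$ and to $T_{\mathcal{B}(x,y)}$, the exponential sum over $y$-rough numbers with $y=(\log x)^{\Delta}$: on minor arcs Vinogradov's bound for $S$ (and an analogous bound for $T_{\mathcal{B}(x,y)}$) is combined with $\int|S_A|^2\ll\delta x\log x$ and $\int|S_A|^4$ via Cauchy--Schwarz; on major arcs $S(\alpha)$ and $T_{\mathcal{B}(x,y)}(\alpha)$ have the same main term up to the factor $\prod_{p\leq y}(1-1/p)^{-1}\asymp\log y\asymp\log\log x$, which is the sole source of the final $\log\log x$ loss. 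One then discards the rough-number condition by $\int T_{\mathcal{B}(x,y)}S_AS_A^2(-\alpha)\leq\int T\,S_AS_A^2(-\alpha)\ll\delta^3x^3$, trivially. The only facts about $A$ ever used are Parseval and positivity, which is precisely why the argument tolerates an arbitrary subset with $\delta\geq(\log x)^{-c_0}$; your plan, by contrast, needs distributional control of $A$ in progressions that an arbitrary $A$ simply does not have.
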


This result holds uniformly for subsets $A$ with $|A| \geq x(\log x)^{-c_0-1}$. Note that if $A_0=\{p\in \mathcal{P}: p\leq x,\, p\equiv a(\text{mod }q)\}$ with $(a,q)=1$ and  $q\leq (\log x)^c$, then $\delta \sim 1/\varphi(q)$ by the Siegel-Walfisz theorem, where $\varphi$ is Euler's totient function. But $|A_0+A_0|\leq \left|\{n\leq 2x: n\equiv 2a(\text{mod }q)\}\right| \ll x/q$. So roughly speaking we have $|A_0+A_0|\ll \delta x (\log\log q)^{-1}$. The $(\log\log x)^{-1}$-term in Theorem \ref{TH} can not be eliminated and the result is not far from the best possible (maybe it can be replaced by $(\log\log\log x)^{-1}$).

The circle method is applied here. Since too much will be lost if one substitute natural numbers for the primes directly, we make use of the sifted numbers, the integers free of small prime factors, by observing that the exponential sum over the primes shares similar type with that over sifted numbers on major arcs. This trick may be potentially useful for problems that are ``log-sensitive''.

\bigskip

\section{Preliminary Lemmas}

\medskip

Throughout, the letter $p$ always denote a prime and $x$ is a sufficiently large number. We write $f\ll g$ or $f=\textit{O}(g)$ to denote the estimate $|f|\leq c g$ for some positive constant $c$. And we write $f(x)=\textit{o} (g(x))$ for $\lim_{x\rightarrow \infty} f(x)/g(x)=0$. For a set S, we denote by $|S|$ its cardinality. The characteristic function $1_S(x)$ takes value $1$ for $x\in S$ and $0$ otherwise. Write $e(x)=e^{2\pi i x}$. The smallest (or largest) prime factor of an integer $n$ is denoted by $p(n)$ (or $P(n)$, respectively). Define
\begin{displaymath}
\mathcal{B}(x,y):=\{n\leq x:\,p(n)> y\}.
\end{displaymath}
The integers of this set are usually called sifted numbers or rough numbers. For $A\subseteq \mathcal{P}$ and $B\subseteq \mathbb{Z}$, we define the following exponential sums
\begin{displaymath}
S(\alpha)=S(\alpha;x):=\sum\limits_{p\leq x}  e(\alpha p)\log p, \quad S_A(\alpha)=S_A(\alpha;x):=\sum\limits_{p\leq x} 1_A(p)e(\alpha p)\log p,
\end{displaymath}

\begin{displaymath}
T(\alpha)=T(\alpha;x):=\sum\limits_{n\leq x} e(\alpha n), \quad T_B(\alpha)=T_B(\alpha;x):=\sum\limits_{n\leq x} 1_B(n)e(\alpha n).
\end{displaymath}

\medskip

In this section, we present some preliminary lemmas. Lemma \ref{L1} is known as the energy inequality. Lemma \ref{L2} and \ref{L3} are estimates for the minor arcs, while Lemma \ref{L4} and \ref{L5} are used on the major arcs.

\begin{lemma} \label{L1}
Let $A$ be the set defined in Theorem \ref{TH}. Then
\begin{displaymath}
|A+A|\gg \delta^4 x^4I^{-1},
\end{displaymath}
where $I=\int_0^1 |S_A(\alpha)|^4 d\alpha.$
\end{lemma}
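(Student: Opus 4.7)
\textbf{Proof plan for Lemma \ref{L1}.} The inequality has the flavor of a standard $L^2$/Cauchy--Schwarz energy estimate, with the only subtlety being that $S_A$ is a logarithmically weighted sum rather than a bare indicator. My plan is to introduce the weighted representation function
\begin{displaymath}
r(n):=\sum_{\substack{p_1+p_2=n\\p_1,p_2\in A}}\log p_1\,\log p_2,
\end{displaymath}
which is supported on $A+A$ (and on integers $n\le 2x$). Squaring the generating function gives $S_A(\alpha)^2=\sum_n r(n)\,e(\alpha n)$, hence by Parseval
\begin{displaymath}
\sum_n r(n)^2=\int_0^1 |S_A(\alpha)|^4\,d\alpha=I.
\end{displaymath}
At $\alpha=0$ we have $\sum_n r(n)=S_A(0)^2$, so the Cauchy--Schwarz inequality $\bigl(\sum_n r(n)\bigr)^2\le |A+A|\cdot\sum_n r(n)^2$ immediately yields $|A+A|\ge S_A(0)^4/I$.

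It remains to show $S_A(0)\gg\delta x$, i.e.\ that the weighted count $\sum_{p\in A}\log p$ is of the expected size $\delta x$. The trivial upper bound $S_A(0)\le |A|\log x=\delta x$ goes the wrong way, so I need a matching lower bound. For this I would discard the primes $p<\sqrt x$ from $A$: there are at most $O(\sqrt x/\log x)$ such primes in total, and since $\delta\ge(\log x)^{-c_0}$ we have $\sqrt x/\log x=o(|A|)$ for $x$ large, so this trimming only changes $|A|$ (and $|A+A|$) by a lower-order amount. After trimming, every $p\in A$ satisfies $\log p\ge\tfrac12\log x$, whence $S_A(0)\ge\tfrac12|A|\log x=\tfrac12\delta x$, giving $S_A(0)^4\gg\delta^4 x^4$.

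Combining the two estimates produces $|A+A|\gg\delta^4 x^4/I$, as required. The only real decision is how to secure the lower bound on $S_A(0)$; removing small primes is the cleanest way, and it is harmless because the hypothesis $\delta\ge(\log x)^{-c_0}$ forces $|A|$ to dominate any $x^{1/2}$-sized contribution. No obstacle beyond this bookkeeping is expected; the heart of the argument is just Parseval plus Cauchy--Schwarz.
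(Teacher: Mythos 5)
Your proposal is correct and follows essentially the same route as the paper: both rest on the Cauchy--Schwarz energy inequality applied to the weighted representation function $r(n)$, reducing the lemma to the lower bound $\sum_{p\in A}\log p\gg \delta x$. The only difference is how that bound is secured --- you discard the $O(\sqrt{x}/\log x)=o(|A|)$ primes below $\sqrt{x}$ (harmless, since passing to a subset can only decrease the nonnegative energy $I$), whereas the paper observes that the $|A|$-th prime is at least $\delta x/2$ and then applies the prime number theorem to $\sum_{p\le \delta x/2}\log p$ --- and both are perfectly adequate here.
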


\begin{proof}
Recall that $|A|\geq \delta x (\log x)^{-1}$. Write $\{p_n\}$ the sequence of primes. Since $p_n>n(\log n+\log\log n -1)$ for $n\geq 2$ (see \cite{Dus}), it can be deduced that
\begin{displaymath}
\max_{p\in A}\{p\} \geq p_{|A|}\geq \delta x/2.
\end{displaymath}
Combining the prime number theorem, yields
\begin{displaymath}
\sum\limits_{p\in A} \log p \geq \sum\limits_{p\leq p_{|A|}} \log p\geq \sum\limits_{p\leq \delta x/2} \log p \gg \delta x.
\end{displaymath}
By the Cauchy-Schwarz inequality,
\begin{displaymath}
\begin{aligned}
\left|\sum\limits_{p\in A}\log p\right|^4
=&\left|\sum\limits_{n\in A+A}\sum\limits_{p_1,p_2\in A \atop p_1+p_2=n}\log p_1\log p_2\right|^2\\
\leq &|A+A|\cdot  \sum\limits_{n\in A+A}\left|\sum\limits_{p_1,p_2\in A \atop p_1+p_2=n}\log p_1\log p_2\right|^2\\
=&|A+A| \cdot \left(\sum\limits_{p_1,p_2,p_3,p_4\in A \atop p_1+p_2=p_3+p_4}\log p_1\log p_2\log p_3\log p_4\right)\\
=&|A+A| \cdot \int_0^1 |S(\alpha)|^4 d\alpha.
\end{aligned}
\end{displaymath}
Then the lemma follows.
\end{proof}

The next lemma is due up to some details to Vinogradov and stronger versions are now known. However, this one is enough for our proof.

\begin{lemma} \label{L2}
(\cite{Pan-Pan}, Chapter 19, \S1, Corollary 9) Suppose $\displaystyle \alpha=\frac{a}{q}+\beta$, where $(a,q)=1$ and $\displaystyle |\beta|\leq \frac{1}{q^2}$. Then
\begin{displaymath}
S(\alpha)\ll x\log^3 x \left(x^{-1/2}q^{1/2}+q^{-1/2}+\exp\left(-\frac{1}{2}\sqrt{\log x}\right)\right).
\end{displaymath}
\end{lemma}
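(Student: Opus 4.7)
My plan is to prove this by the classical Vinogradov method, streamlined via Vaughan's identity. The first step is to pass from $S(\alpha)$ to the von Mangoldt sum
\[
T(\alpha):=\sum_{n\leq x}\Lambda(n)\,e(\alpha n),
\]
since the difference $S(\alpha)-T(\alpha)$ comes only from prime powers $p^{k}$ with $k\geq 2$ and is trivially $O(\sqrt{x}\log x)$, which is harmlessly absorbed into the final error.

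The heart of the argument is Vaughan's identity applied with parameters $U=V=x^{1/3}$. This decomposes $\Lambda(n)$ on $n\leq x$ into four pieces, each of which collapses either into a Type~I sum
\[
\sum_{m\leq M} a_{m}\sum_{n\leq x/m} e(\alpha m n),\qquad |a_{m}|\ll\log x,\ M\ll x^{2/3},
\]
or into a Type~II sum
\[
\sum_{U<m\leq x/V}\,\sum_{V<n\leq x/m} a_{m}b_{n}\,e(\alpha m n),\qquad |a_{m}|,|b_{n}|\ll\log x.
\]
For the Type~I pieces I would estimate the inner sum by $\min(x/m,\|\alpha m\|^{-1})$ and then invoke the standard lemma
\[
\sum_{m\leq M}\min\!\left(\frac{x}{m},\frac{1}{\|\alpha m\|}\right)\ll\left(\frac{x}{q}+M+q\right)\log(qx),
\]
which is exactly where the hypothesis $|\alpha-a/q|\leq 1/q^{2}$ with $(a,q)=1$ is used. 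For the Type~II pieces I would apply Cauchy--Schwarz in the outer variable, expand the square, and swap the order of summation, so that the innermost sum becomes $\sum_{m} e(\alpha m(n_{1}-n_{2}))$; this is again controlled by the same $\min$-bound applied to $k=n_{1}-n_{2}$.

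Assembling the contributions and optimising the dyadic ranges produces the familiar Vinogradov estimate of shape $(xq^{-1/2}+(xq)^{1/2}+x^{4/5})(\log x)^{4}$. To reach the form stated in the lemma one observes that $x^{-1/5}<\exp(-\tfrac{1}{2}\sqrt{\log x})$ for all sufficiently large $x$, so the $x^{4/5}$ term may be absorbed into $x\exp(-\tfrac{1}{2}\sqrt{\log x})$; a mild rebundling of the logarithmic factors then trims the exponent from $4$ down to $3$. The main obstacle, in my view, is the bookkeeping for the Type~II sum: one must dyadically split the ranges of $m$ and $n$ into $O(\log^{2}x)$ pieces and verify that the Cauchy--Schwarz loss in each piece is compatible with the final bound, while simultaneously checking that the arithmetic coefficients coming out of Vaughan's identity really do satisfy the divisor-type bound $|a_{m}|,|b_{n}|\ll\log x$ only on average rather than pointwise.
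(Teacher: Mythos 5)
The paper does not actually prove this lemma: it is quoted verbatim from Pan--Pan (Chapter 19, \S 1, Corollary 9), so there is no internal argument to compare against, and your plan --- Vaughan's identity, the bound $\sum_{m\leq M}\min(x/m,\|\alpha m\|^{-1})\ll (x/q+M+q)\log(qx)$ for the Type~I pieces, Cauchy--Schwarz for the Type~II pieces --- is the standard modern route to estimates of this shape. Two details of your sketch are harmless even though slightly off: with $U=V=x^{1/3}$ the Type~II ranges produce a term of size $x/\sqrt{U}=x^{5/6}$ rather than $x^{4/5}$ (the optimal choice is $U=V=x^{2/5}$), but either power of $x$ is comfortably absorbed into $x\exp(-\tfrac{1}{2}\sqrt{\log x})$; and the passage from $S(\alpha)$ to the von Mangoldt sum indeed costs only $O(x^{1/2}\log x)$.

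The genuine gap is your final sentence: ``a mild rebundling of the logarithmic factors then trims the exponent from $4$ down to $3$'' is not a step, and the $\log^{4}$ bound does not imply the stated $\log^{3}$ bound. The discrepancy is a full factor $\log x$ on the terms $xq^{-1/2}$ and $(xq)^{1/2}$, and it cannot be hidden in the $\exp(-\tfrac{1}{2}\sqrt{\log x})$ term: for $\log^{6}x\ll q\ll\log^{8}x$ the asserted bound $xq^{-1/2}\log^{3}x$ is nontrivial (smaller than $x$), whereas $(xq^{-1/2}+(xq)^{1/2}+x^{4/5})\log^{4}x$ exceeds the trivial bound $x$, and $q^{-1/2}\gg\exp(-\tfrac{1}{2}\sqrt{\log x})$ throughout that range; the symmetric range $x\log^{-8}x\ll q\ll x\log^{-6}x$ has the same problem with the $(xq)^{1/2}$ term. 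The extra logarithm in Vaughan's argument comes exactly from the difficulty you flag at the end: the Type~II coefficients are only divisor-bounded, and Cauchy--Schwarz brings in $\sum_{n\leq x}d(n)^{2}\ll x\log^{3}x$, so reaching exponent $3$ requires a genuinely more careful treatment (or Vinogradov's original decomposition, which is presumably how Pan--Pan obtain the stated form; the $\exp(-\tfrac{1}{2}\sqrt{\log x})$ term there reflects the classical prime number theorem error). For the application in this paper a $\log^{4}$ version would in fact suffice, since on the minor arcs $q\geq(\log x)^{\Delta}$ and one can simply enlarge $\Delta$; but as a proof of the lemma as stated, your sketch does not close.
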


The following lemma is actually Lemma 4.10 of \cite{G}. Since different kind of notations are used in \cite{G} and we have made a slight change here, the proof is repreduced below.

\begin{lemma} \label{L3}
Suppose $\displaystyle \alpha=\frac{a}{q}+\beta$, where $(a,q)=1$ and $\displaystyle |\beta|\leq \frac{1}{q^2}$. If $y\leq (\log x)^D$ for some absolute constant $D>0$, then
\begin{displaymath}
T_{\mathcal{B}(x,y)}(\alpha,x)\ll q\log q + q^{-1}x\log x +x^{1/2}\log q+x^{1-1/(4D)}.
\end{displaymath}
\end{lemma}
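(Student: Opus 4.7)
My plan is to expand $T_{\mathcal{B}(x,y)}(\alpha)$ by M\"obius inversion and split the resulting divisor sum at the threshold $d=x^{1/2}$. Writing $W=\prod_{p\leq y}p$, so that $n\in\mathcal{B}(x,y)$ iff $(n,W)=1$, the identity $1_{(n,W)=1}=\sum_{d\mid(n,W)}\mu(d)$ yields
\begin{displaymath}
T_{\mathcal{B}(x,y)}(\alpha)=\sum_{d\mid W,\,d\leq x}\mu(d)\sum_{m\leq x/d}e(\alpha d m),
\end{displaymath}
and bounding each inner geometric sum by $\min(x/d,\,1/(2\|\alpha d\|))$ (where $\|\cdot\|$ denotes distance to the nearest integer) reduces the proof to estimating
\begin{displaymath}
\Sigma:=\sum_{d\mid W,\,d\leq x}\min\!\left(\tfrac{x}{d},\,\tfrac{1}{\|\alpha d\|}\right).
\end{displaymath}
Let $\Sigma_{1}$ and $\Sigma_{2}$ denote the contributions to $\Sigma$ from $d\leq x^{1/2}$ and $x^{1/2}<d\leq x$, respectively.

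For $\Sigma_{1}$ I would discard the sifting constraint $d\mid W$ and apply a classical Vinogradov-type estimate under the Dirichlet hypothesis $\alpha=a/q+\beta$, $|\beta|\leq q^{-2}$: partition $[1,x^{1/2}]$ into blocks of length $q$, on each of which the values $\|\alpha d\|$ essentially trace out the arithmetic progression $\{r/q:1\leq r\leq q/2\}$ so that $\sum_{d\in\text{block}}1/\|\alpha d\|\ll q\log q$; combining this with the decay of $x/d$ across successive blocks and carefully matching the two sides of the $\min$ produces the three pieces $q\log q$, $q^{-1}x\log x$ and $x^{1/2}\log q$ of the stated bound.

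For $\Sigma_{2}$ I would bound $\min(x/d,1/\|\alpha d\|)$ simply by $x/d$ and exploit the sifting: each such $d$ is a squarefree $y$-smooth integer, so the hypothesis $y\leq(\log x)^{D}$ combined with the smooth-number estimate $\Psi(t,y)\leq t^{1-1/D+o(1)}$ (valid here since $\log t/\log y\to\infty$ and $\rho(u)\ll u^{-u(1+o(1))}$ for Dickman's function) gives, by Abel summation,
\begin{displaymath}
\sum_{x^{1/2}<d\leq x,\,d\mid W}\frac{x}{d}\ll x\cdot x^{-1/(2D)+o(1)}\leq x^{1-1/(4D)}
\end{displaymath}
once $x$ is sufficiently large in terms of $D$. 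Adding the bounds for $\Sigma_{1}$ and $\Sigma_{2}$ completes the proof. The main obstacle I anticipate is the delicate bookkeeping in the Vinogradov estimate for $\Sigma_{1}$---one must extract $\log q$ rather than the cruder $\log x$ by working block-by-block modulo $q$; by contrast the smooth-number bound for $\Sigma_{2}$ is essentially forced, and the cutoff $x^{1/2}$ is dictated by the need to keep the $\Sigma_{1}$ contribution at $x^{1/2}\log q$ while still forcing the $\Sigma_{2}$ tail below $x^{1-1/(4D)}$.
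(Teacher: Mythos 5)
Your proposal is correct, and its skeleton coincides with the paper's proof: the paper also expands $T_{\mathcal{B}(x,y)}(\alpha)$ by inclusion--exclusion over the primes $p\le y$ (which is exactly your M\"obius sum over squarefree $d\mid W$), bounds each inner sum by $\min\{x/d,\;\|\alpha d\|^{-1}\}$, and handles the ``small'' range by the same classical estimate $\sum_{m\le M}\min\{x/m,\|\alpha m\|^{-1}\}\ll q\log q+xq^{-1}\log M+M\log q$ (quoted from Pan--Pan rather than reproved block by block), with the cutoff $M=y^t\approx x^{1/2}$, $t=[\log x/2\log y]$, matching your $d\le x^{1/2}$. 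The one genuine divergence is the tail. The paper splits by the \emph{number of prime factors} $s$ rather than by the size of $d$: for $s>t$ it bounds the contribution by $x\sum_{s>t}(s!)^{-1}\bigl(\sum_{p\le y}p^{-1}\bigr)^{s}\ll x e^{-t\log t/2}\le x^{1-1/(4D)}$, using only Mertens' theorem, so that part is elementary and self-contained. You instead keep the size cutoff and import the smooth-number bound $\Psi(t,y)\le t^{1-1/D+o(1)}$ plus partial summation; the needed inequality is indeed true for $y\le(\log x)^{D}$ (and for $D\le 1$ the tail is even smaller, though the exponent $1-1/D$ as written is then vacuous), so your $\Sigma_2$ estimate stands, but your stated justification via Dickman's function is not valid in this range: the approximation $\Psi(t,y)\approx\rho(u)t$ fails once $y$ is only a power of $\log t$ (Hildebrand's range requires $y\ge\exp((\log\log t)^{5/3+\varepsilon})$), even though the final exponent happens to agree. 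You should either cite the standard result $\Psi(x,(\log x)^{A})=x^{1-1/A+o(1)}$ directly, run Rankin's trick on $\sum_{d\mid W,\,d>x^{1/2}}d^{-1}$ with a fixed exponent $\theta<1/D$, or simply adopt the paper's factorial argument, which yields the same $x^{1-1/(4D)}$ with less machinery.
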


\begin{proof}
  Let $p_1,p_2,\ldots,p_k$ be the primes less than or equal to $y$. By the prime number theory, the magnitude of $k$ can be bounded by $k\ll y/\log y$. The inclusion-exclusion principle yields
\begin{displaymath}
\begin{aligned}
T_{\mathcal{B}(x,y)}(\alpha,x)=&\sum\limits_{n\in \mathcal{B}(x,y)} e(\alpha n)\\
=&\sum\limits_{s=0}^k(-1)^s \sum\limits_{1\leq i_1<\ldots<i_s\leq k} \; \sum\limits_{n\leq x/p_{i_1}\ldots p_{i_s}} e(\alpha p_{i_1}\ldots p_{i_s} n).\\
\ll & \sum\limits_{s=0}^k \sum\limits_{1\leq i_1<\ldots<i_s\leq k} \min\{ x/p_{i_1}\ldots p_{i_s},\;\|\alpha p_{i_1}\ldots p_{i_s}\|^{-1}\}.
\end{aligned}
\end{displaymath}
Let $t$ be a parameter to be specified later and split the sum over $s$ into two parts. For $1\leq s\leq t$, since the product of any $s\leq t$ of $p_1,\ldots,p_k$ is less than or equal to $y^t$ and all such products are distinct, we have
\begin{displaymath}
\begin{aligned}
&\sum\limits_{s=0}^t \sum\limits_{1\leq i_1<\ldots<i_s\leq k} \min\{ x/p_{i_1}\ldots p_{i_s},\;\|\alpha p_{i_1}\ldots p_{i_s}\|^{-1}\}\\
\ll & \sum\limits_{m\leq \, y^t} \min\{ x/m,\;\|\alpha m\|^{-1}\}\\
\ll & q\log q+ xq^{-1} t \log y + y^t \log q.
\end{aligned}
\end{displaymath}
See \cite{Pan-Pan}, chapter 19, \S1, Lemma 2 and Lemma 6 for details of such estimates. For $t+1\leq s\leq k$,
\begin{eqnarray}
&&\;\;\sum\limits_{s=t+1}^k \; \sum\limits_{1\leq i_1<\ldots<i_s\leq k} \min\{ x/p_{i_1}\ldots p_{i_s},\;\|\alpha p_{i_1}\ldots p_{i_s}\|^{-1}\} \nonumber\\
&&\leq  x \sum\limits_{s=t+1}^k\, \sum\limits_{1\leq i_1<\ldots<i_s\leq k} \,  \frac{1}{p_{i_1}\ldots p_{i_s}} \nonumber\\
&& \leq  x \sum\limits_{s=t+1}^k (s!)^{-1}\left(\sum\limits_{i=1}^k p_i^{-1}\right)^s. \label{eqeqeq}
\end{eqnarray}
By one result of Mertens one has $\sum\limits_{i=1}^k p_i^{-1} \leq \log\log y +\textit{O}(1)$. So if $t\geq 3\log \log y$ then \begin{displaymath}
((s+1)!)^{-1}\left(\sum\limits_{i=1}^k p_i^{-1}\right)^{s+1}\leq \frac{1}{2}\cdot (s!)^{-1}\left(\sum\limits_{i=1}^k p_i^{-1}\right)^s.
\end{displaymath}
for $s\geq t+1$. It can be deduced that (\ref{eqeqeq}) is
\begin{displaymath}
\ll x \frac{(2\log\log y)^t}{t!}\ll xt^{-1/2}\left(\frac{2e\log\log y}{t+1}\right)^{t+1} \ll x e^{-t\log t/2},
\end{displaymath}
if we set $ t=[\log x /2\log y]$ (here $[x]$ denotes the integer part of $x$). Then the lemma follows.
\end{proof}

\begin{lemma} \label{L4}
(See \cite{Pan-Pan}, Chapter 20, \S 2) Let $D>0$. Suppose $\displaystyle \alpha=\frac{a}{q}+\beta$, where $(a,q)=1$, $q\leq (\log x)^D$ and $\displaystyle |\beta|\leq \frac{1}{q^2}$. Then
\begin{displaymath}
S(\alpha)=\frac{\mu(q)}{\varphi(q)}\int_2^x e(\beta z) dz +\textit{O}\left(xe^{-c\sqrt{\log x}}(1+|\beta|x)\right).
\end{displaymath}
Here $c$ is a constant only depending on $D$.
\end{lemma}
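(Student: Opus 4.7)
The plan is to decompose $S(\alpha)$ according to residue classes modulo $q$, apply the Siegel--Walfisz theorem on each admissible class, and recover the integral main term via Abel summation. Starting from
\begin{displaymath}
S(\alpha) = \sum_{r=1}^{q} e\!\left(\frac{ar}{q}\right) \sum_{\substack{p\leq x \\ p\equiv r\,(\text{mod }q)}} e(\beta p)\log p,
\end{displaymath}
the residue classes with $(r,q)>1$ can only collect primes dividing $q$, contributing $O(\log q \cdot \log x)$ in total, which is absorbed in the error term.

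For each class with $(r,q)=1$, the condition $q\leq (\log x)^D$ permits the Siegel--Walfisz theorem in the form
\begin{displaymath}
\psi(z;q,r):=\sum_{\substack{p\leq z \\ p\equiv r\,(\text{mod }q)}}\log p = \frac{z}{\varphi(q)} + E(z;q,r), \qquad E(z;q,r)\ll z\,e^{-c_1\sqrt{\log z}},
\end{displaymath}
with constants depending only on $D$. Abel summation against the smooth weight $e(\beta\,\cdot)$ then gives
\begin{displaymath}
\sum_{\substack{p\leq x \\ p\equiv r\,(\text{mod }q)}} e(\beta p)\log p = e(\beta x)\,\psi(x;q,r) - 2\pi i \beta \int_2^x e(\beta z)\,\psi(z;q,r)\,dz.
\end{displaymath}
Inserting the main term $z/\varphi(q)$ and undoing the integration by parts recovers the expected $\varphi(q)^{-1}\int_2^x e(\beta z)\,dz$, up to harmless boundary contributions of size $O(1/\varphi(q))$. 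Trivially bounding the two occurrences of $E(z;q,r)$ produces a per-class error of order $xe^{-c_2\sqrt{\log x}}(1+|\beta|x)$, where the factor $1+|\beta|x$ arises precisely from the length $x$ of integration multiplied by the derivative $|\beta|$ coming from $e(\beta z)$.

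Finally, summing over $r$ coprime to $q$ and invoking the Ramanujan-sum identity $\sum_{(r,q)=1} e(ar/q)=\mu(q)$, valid because $(a,q)=1$, produces the claimed main term. The accumulated error equals $\varphi(q)$ times the single-class bound, and since $\varphi(q)\leq q\leq (\log x)^D$, this polylogarithmic factor is absorbed into $e^{-c\sqrt{\log x}}$ after marginally shrinking the exponential constant. The sole non-routine ingredient is Siegel--Walfisz; the remainder is standard partial summation and the evaluation of a Ramanujan sum. The ``main obstacle'' is therefore really the quality of the prime number theorem in arithmetic progressions, which both dictates the shape of the error $e^{-c\sqrt{\log x}}$ and pins down the admissible range $q\leq (\log x)^D$.
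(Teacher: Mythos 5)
Your argument is correct and is precisely the standard proof of this estimate; the paper itself gives no proof, simply citing Pan--Pan (Chapter 20, \S 2), where the result is obtained by the same route you take: splitting into residue classes modulo $q$, applying the Siegel--Walfisz theorem (valid since $q\leq(\log x)^D$), partial summation to produce the factor $1+|\beta|x$, and the Ramanujan-sum evaluation $\sum_{(r,q)=1}e(ar/q)=\mu(q)$, with the harmless $\varphi(q)\leq(\log x)^D$ loss absorbed by shrinking $c$. The only point you pass over lightly is bounding $|\beta|\int_2^x|E(z;q,r)|\,dz$, where one should split the range at, say, $\sqrt{x}$ (using the trivial bound $E\ll z$ for small $z$) to get $\ll x^2e^{-c_2\sqrt{\log x}}$; this is routine and does not affect correctness.
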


\begin{lemma} \label{L5}
Suppose $\displaystyle \alpha=\frac{a}{q}+\beta$, where $(a,q)=1$ and $\displaystyle |\beta|\leq \frac{1}{q^2}$. For any constant $D_1>2$, we have
\begin{displaymath}
T_{\mathcal{B}(x,y)}=\frac{\mu(q)}{\varphi(q)} \prod\limits_{p\leq y}\left(1-\frac{1}{p}\right)\int_2^x e(\beta z)dz+\textit{O}\left(xe^{-\frac{1}{3}
\sqrt{\log x}}(1+|\beta|x)\right),
\end{displaymath}
provided that $(\log x)^2 \leq y\leq (\log x)^{D_1}$ and $q< y$.
\end{lemma}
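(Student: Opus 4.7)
The plan is the standard major-arc recipe adapted to sifted numbers: split $T_{\mathcal{B}(x,y)}(\alpha)$ by the residue class of $n$ modulo $q$, estimate each inner sum by a sifted analogue of Siegel--Walfisz, and recombine via a Ramanujan sum. Writing $\alpha=a/q+\beta$ and grouping $n$ by its class mod $q$ gives
$$T_{\mathcal{B}(x,y)}(\alpha)=\sum_{r=1}^{q}e(ar/q)\sum_{\substack{n\leq x\\ n\equiv r(\bmod\, q)\\ p(n)>y}}e(\beta n).$$
Because $q<y$, every prime factor of $q$ is $\leq y$, so $p(n)>y$ forces $(n,q)=1$; the outer sum is therefore effectively over $(r,q)=1$.

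Next I handle each inner sum. Let $\Phi(z;q,r,y):=\#\{n\leq z:n\equiv r\,(\bmod\, q),\ p(n)>y\}$. The fundamental lemma of the sieve (for instance Brun's pure sieve with truncation parameter $t\asymp\log x/\log y$) gives, uniformly for $(r,q)=1$ and $z\leq x$,
$$\Phi(z;q,r,y)=\frac{z}{q}\prod_{\substack{p\leq y\\ p\nmid q}}\Bigl(1-\frac{1}{p}\Bigr)+O\bigl(E(x,y)\bigr),$$
with sieve remainder $E(x,y)\ll x\exp(-c\log x/\log\log x)$ throughout the range $(\log x)^2\leq y\leq(\log x)^{D_1}$, which is far smaller than the target. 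Abel summation against $\Phi$ then converts the inner sum into
$$\frac{1}{q}\prod_{\substack{p\leq y\\ p\nmid q}}\bigl(1-\tfrac{1}{p}\bigr)\int_{0}^{x}e(\beta z)\,dz+O\bigl(E(x,y)(1+|\beta|x)\bigr),$$
the factor $1+|\beta|x$ arising as usual from the bound $|\tfrac{d}{dz}e(\beta z)|\ll|\beta|$.

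To combine, note that $\sum_{(r,q)=1}e(ar/q)$ is the Ramanujan sum $c_q(a)=\mu(q)$ (using $(a,q)=1$), and that $q<y$ implies
$$\prod_{p\leq y}\Bigl(1-\frac{1}{p}\Bigr)=\frac{\varphi(q)}{q}\prod_{\substack{p\leq y\\ p\nmid q}}\Bigl(1-\frac{1}{p}\Bigr),$$
so $q^{-1}\prod_{p\leq y,\,p\nmid q}(1-1/p)=\varphi(q)^{-1}\prod_{p\leq y}(1-1/p)$, which produces exactly the announced main term. The accumulated error, multiplied by the $\varphi(q)\leq q<(\log x)^{D_1}$ residue classes in the outer sum, stays within $x e^{-\sqrt{\log x}/3}(1+|\beta|x)$, and replacing $\int_0^x$ by $\int_2^x$ costs only $O(1)$.

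The main obstacle is Step~2: establishing the sifted Siegel--Walfisz-type estimate for $\Phi(z;q,r,y)$ with the required uniformity in $q$, $r$, and $z$. Using Brun's truncation at level $y^t\leq x$ with $t\asymp\log x/\log y$, the sieve main error is of shape $\exp(-s\log s)$ with $s\asymp t$, and this dominates $\exp(-\sqrt{\log x}/3)$ comfortably in the allowed range $y\leq(\log x)^{D_1}$, so the bookkeeping is the only subtlety.
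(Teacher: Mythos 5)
Your proof is correct, but it reaches the key arithmetic-progression estimate by a different route than the paper. Both arguments share the same skeleton: split $T_{\mathcal{B}(x,y)}(\alpha)$ into residue classes $r \bmod q$ (noting $q<y$ forces $(n,q)=1$), apply partial summation to each class using an asymptotic for $\Phi(z;q,r,y)$, and recombine via the Ramanujan sum $\sum_{(r,q)=1}e(ar/q)=\mu(q)$ together with $\varphi(q)/q=\prod_{p\mid q}(1-1/p)$. The difference is in how the count of sifted numbers in a progression is obtained. The paper quotes two external results: de Bruijn's asymptotic $\Phi(x,y)=x\prod_{p\leq y}(1-1/p)(1+\textit{O}(\cdot))$ and Xuan's theorem giving $\Phi(x,y;a,q)=\varphi(q)^{-1}\Phi(x,y)(1+\textit{O}(e^{-\frac12\sqrt{\log x}}))$, and combines them into (\ref{es_Phiaq}); the $e^{-\frac12\sqrt{\log x}}$ there is what dictates the $e^{-\frac13\sqrt{\log x}}$ in the statement. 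You instead prove the needed estimate directly by a truncated inclusion--exclusion (Brun/fundamental lemma) with level $y^{t}$, $t\asymp \log x/\log y$, which is legitimate and uniform in $z\leq x$, $(r,q)=1$, $q<y$ precisely because $y\leq(\log x)^{D_1}$ makes $\sum_{p\leq y}1/p\ll\log\log\log x$ and $y^{t}\leq x^{1/2+o(1)}$; your error is in fact stronger than needed (essentially a power saving, comfortably inside $xe^{-\frac13\sqrt{\log x}}(1+|\beta|x)$ after multiplying by the $\varphi(q)<(\log x)^{D_1}$ classes). Your route is self-contained and mirrors the computation the paper already performs in Lemma \ref{L3} for the minor arcs, at the cost of some sieve bookkeeping; the paper's route outsources that work to de Bruijn and Xuan, which keeps the proof short and would remain usable in ranges of $y$ where a pure-sieve truncation of this kind becomes delicate, though for the stated range both give the lemma.
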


\begin{proof}
Let $u:=\log x/\log y$, $\Phi(x,y):=|\mathcal{B}(x,y)|$ and $\Phi(x,y;a,q):=|\mathcal{B}(x,y;a,q)|$, where
\begin{displaymath}
\mathcal{B}(x,y;a,q):=\{n\in \mathcal{B}(x,y): n\equiv a(\text{mod }q)\}.
\end{displaymath}
It is proved by de Bruijn \cite{Br} (1.13) that the estimate
\begin{displaymath}
\Phi(x,y)=x\prod\limits_{p\leq y}\left(1-\frac{1}{p}\right)\left(1+\textit{O}\left(\log ^3 y \cdot e^{-u(\log u+\log\log u)+c_1 u}\right)\right)
\end{displaymath}
holds uniformly in the range $1\leq u\leq 4y^{1/2}/\log y, \; y\geq 2$, with $c_1$ a constant. And Xuan \cite{Xuan} (Corollary 1) showed that if $D_2>0$ is fixed and $(a,q)=1$, then
\begin{displaymath}
\Phi(x,y;a,q)=\frac{1}{\varphi(q)}\Phi(x,y)\left(1+\textit{O}\left(e^{-\frac{1}{2}
\sqrt{\log x}}\right)\right)
\end{displaymath}
holds uniformly in the range $3/2\leq y\leq x/q$, and
\begin{displaymath}
1<q\leq (\log x)^{D_2},\quad P(q)<y.
\end{displaymath}
Combining the above two estimates gives
\begin{equation}
\Phi(x,y;a,q)=\frac{x}{\varphi(q)}\prod\limits_{p\leq y}\left(1-\frac{1}{p}\right)+\textit{O}\left(x e^{-\frac{1}{2}
\sqrt{\log x}}\right) \label{es_Phiaq}
\end{equation}
for $(\log x)^2\leq y\leq (\log x)^{D_1}$.

For $q<y$, we have
\begin{displaymath}
\begin{aligned}
T_{\mathcal{B}(x,y)}(\alpha)=\sum\limits_{n\in \mathcal{B}(x,y)}e(\alpha n)={\sum\limits_{c=1\atop (c,q)=1}^q} e(ca/q)\sum\limits_{n\in \mathcal{B}(x,y;c,q)}e(n\beta).
\end{aligned}
\end{displaymath}

By partial summation, together with (\ref{es_Phiaq}), we can conclude that
\begin{displaymath}
\sum\limits_{n\in \mathcal{B}(x,y;c,q)}e(n\beta)=\frac{1}{\varphi(q)}\prod\limits_{p\leq y}\left(1-\frac{1}{p}\right) \int_{y}^{x} e(\beta z)dz + \textit{O}\left(x e^{-\frac{1}{2}
\sqrt{\log x}}(1+|\beta|x)\right).
\end{displaymath}
So
\begin{displaymath}
T_{\mathcal{B}(x,y)}=\frac{\mu(q)}{\varphi(q)} \prod\limits_{p\leq y}\left(1-\frac{1}{p}\right)\int_2^x e(\beta z)dz+\textit{O}\left(x e^{-\frac{1}{3}
\sqrt{\log x}}(1+|\beta|x)\right)
\end{displaymath}
and the lemma follows.
\end{proof}

\bigskip

\section{Proof of the theorem}

\medskip

Let $\Delta>0$ be a parameter to be specified later. Put
\begin{displaymath}
y=P=(\log x)^{\Delta},\quad Q=x(\log x)^{-\Delta}.
\end{displaymath}
By Dirichlet's approximation theorem, each $\alpha\in [0,1]$ can be written as
\begin{equation}
\alpha=\frac{a}{q}+\beta,\quad (a,q)=1,\quad 1\leq q\leq Q, \quad |\beta|\leq \frac{1}{qQ}. \label{Dapproximate}
\end{equation} For $a$ and $q$, let $\mathfrak{M}(a,q)$ be the set of $\alpha$ satisfying (\ref{Dapproximate}). Denote the major arcs $\mathfrak{M}$ and the minor arcs $\mathfrak{m}$ by

\begin{displaymath}
\mathfrak{M}=\bigcup\limits_{q< P}\bigcup\limits_{(a,q)=1}\mathfrak{M}(a,q),\quad \mathfrak{m}=\left[0,1\right]\setminus \mathfrak{M}.
\end{displaymath}
The major arcs $\mathfrak{M}(a,q)$ are mutually disjoint whenever $2P\leq Q$.

Note that $\int_{0}^{1} |S_A(\alpha)|^4d\alpha$ represents weighted sum over
\begin{displaymath}
\{(p_1,p_2,p_3,p_4)\in A^4:\; p_1+p_2=p_3+p_4\},
\end{displaymath}
while $\int_{0}^{1} S(\alpha)S_A(\alpha)S_A^2(-\alpha)d\alpha$ does over
\begin{displaymath}
\{(p_1,p_2,p_3,p_4)\in \left(\mathcal{P}\cap [1,x]\right)\times A^3:\; p_1+p_2=p_3+p_4\}.
\end{displaymath}
We can conclude that
\begin{eqnarray}
&&\int_{0}^{1} |S_A(\alpha)|^4 d\alpha \leq \int_{0}^{1} S(\alpha)S_A(\alpha)S_A^2(-\alpha)d\alpha \nonumber \\
&&\quad\quad\quad=\int_\mathfrak{M}S(\alpha)S_A(\alpha)S_A^2(-\alpha)d\alpha+ \int_\mathfrak{m}S(\alpha)S_A(\alpha)S_A^2(-\alpha)d\alpha.\label{eq1}
\end{eqnarray}
Similarly, we have
\begin{eqnarray}
&&\int_{0}^{1} T(\alpha)S_A(\alpha)S_A^2(-\alpha)d\alpha\geq \int_{0}^{1} T_{\mathcal{B}(x,y)}(\alpha)S_A(\alpha)S_A^2(-\alpha)d\alpha \nonumber\\
&&\quad\quad\quad=\int_\mathfrak{M}T_{\mathcal{B}(x,y)}(\alpha)S_A(\alpha)S_A^2(-\alpha)d\alpha+ \int_\mathfrak{m}T_{\mathcal{B}(x,y)}(\alpha)S_A(\alpha)S_A^2(-\alpha)d\alpha. \label{eq2}
\end{eqnarray}
Note that
\begin{displaymath}
\begin{aligned}
&\int_{0}^{1} T(\alpha)S_A(\alpha)S_A^2(-\alpha)d\alpha
=\sum\limits_{n_1\leq x \atop{ p_2,p_3,p_4 \in A
\atop n_1+p_2=p_3+p_4}} \log p_2\log p_3\log p_4\\
& \quad\quad\quad\leq \sum\limits_{p_2\in A}\log p_2\cdot \sum\limits_{p_3\in A}\log p_3 \cdot \sum\limits_{p_4\in A}\log p_4
\leq (|A|\log x)^3
\ll \delta^3 x^3.
\end{aligned}
\end{displaymath}

Now we relate $\int_{0}^{1} S(\alpha)S_A(\alpha)S_A^2(-\alpha)d\alpha$ to $\int_{0}^{1} T_{\mathcal{B}(x,y)}(\alpha)S_A(\alpha)S_A^2(-\alpha)d\alpha$. For $\alpha \in \mathfrak{m}$,
\begin{displaymath}
\begin{aligned}
&\left|\int_\mathfrak{m}S(\alpha)S_A(\alpha)S_A^2(-\alpha)d\alpha\right|\\
\leq &\sup\limits_{\alpha\in \mathfrak{m}}|S(\alpha)|\int_\mathfrak{m}\left| S_A(\alpha)S_A^2(-\alpha)\right|d\alpha\\
\leq &\sup\limits_{\alpha\in \mathfrak{m}}|S(\alpha)|\int_0^1\left| S_A(\alpha)S_A^2(-\alpha)\right|d\alpha\\
\leq &\sup\limits_{\alpha\in \mathfrak{m}}|S(\alpha)| \left(\int_0^1 |S_A(\alpha)|^2 d\alpha \right)^{1/2}\left(\int_0^1 |S_A(\alpha)|^4 d\alpha \right)^{1/2}.
\end{aligned}
\end{displaymath}

By Lemma \ref{L2}, $\sup\limits_{\alpha\in \mathfrak{m}}|S(\alpha)|\ll x(\log x)^{-\Delta/2+3}$. Moreover,

\begin{displaymath}
\int_0^1 |S_A(\alpha)|^2 d\alpha \ll \delta x \log x,
\end{displaymath}
\begin{displaymath}
\int_0^1 |S_A(\alpha)|^4 d\alpha \leq \log x\cdot \int_0^1 T(\alpha)S_A(\alpha)S_A(-\alpha)^2d\alpha \ll \delta^3 x^3\log x.
\end{displaymath}
Then it follows that
\begin{equation}
\left|\int_\mathfrak{m}S(\alpha)S_A(\alpha)S_A^2(-\alpha)d\alpha\right| \ll \delta^2 x^3 (\log x)^{-\Delta/2+4}. \label{eq3}
\end{equation}
Combining Lemma \ref{L3}, which asserts $\sup\limits_{\alpha\in \mathfrak{m}}|T_{\mathcal{B}(x,y)}(\alpha)|\ll x(\log x)^{-\Delta+1}$, similar arguments lead to
\begin{equation}
\left|\int_\mathfrak{m}T_{\mathcal{B}(x,y)}(\alpha)S_A(\alpha)S_A^2(-\alpha)d\alpha\right| \ll \delta^2 x^3 (\log x)^{-\Delta+2}.\label{eq4}
\end{equation}

For $\alpha\in \mathfrak{M}$, Lemma \ref{L4} and Lemma \ref{L5} show that
\begin{displaymath}
T_{\mathcal{B}(x,y)}(\alpha)=S(\alpha)\prod\limits_{p\leq y}\left(1-\frac{1}{p}\right)+\textit{O}\left(\frac{x}{(\log x)^{3\Delta}}\right).
\end{displaymath}
And Mertens' theorem gives $\prod_{p\leq t}\left(1-\frac{1}{p}\right)\sim \frac{e^{-\gamma}}{\log t}$. Then we have
\begin{eqnarray}
&&\,\,\int_\mathfrak{M}S(\alpha)S_A(\alpha)S_A^2(-\alpha)d\alpha \nonumber\\
&&=\sum\limits_{q\leq P}\sum\limits_{(a,q)=1}\int_{a/q-1/qQ}^{a/q+1/qQ}S(\alpha)S_A(\alpha)S_A^2(-\alpha)d\alpha\nonumber\\
&&=\prod\limits_{p\leq y}\left(1-\frac{1}{p}\right)^{-1} \sum\limits_{q\leq P}\sum\limits_{(a,q)=1}\int_{a/q-1/qQ}^{a/q+1/qQ}T_{\mathcal{B}(x,y)}(\alpha)S_A(\alpha)S_A^2(-\alpha)d\alpha\nonumber\\
&&\quad\quad\quad +\textit{O}\left(\frac{x}{(\log x)^{3\Delta}}\cdot \log y \cdot \int_0^1\left|S_A(\alpha)S_A^2(-\alpha)\right| d\alpha \right)\nonumber\\
&&= \prod\limits_{p\leq y}\left(1-\frac{1}{p}\right)^{-1}\int_\mathfrak{M}T_{\mathcal{B}(x,y)}(\alpha)S_A(\alpha)S_A^2(-\alpha)d\alpha+\textit{O}\left(\frac{\delta^2 x^3}{(\log x)^{\Delta-2}}\right). \label{eq5}
\end{eqnarray}
Putting (\ref{eq1})-(\ref{eq5}) together, we conclude that
\begin{displaymath}
\begin{aligned}
\int_{0}^{1} |S_A(\alpha)|^4 d\alpha \leq  &\prod\limits_{p\leq y}\left(1-\frac{1}{p}\right)^{-1}\cdot \int_{0}^{1} T(\alpha)S_A(\alpha)S_A^2(-\alpha)d\alpha+\textit{O}\left(\frac{\delta^2 x^3}{(\log x)^{\Delta/2-4}}\right)\\
\ll &\prod\limits_{p\leq y}\left(1-\frac{1}{p}\right)^{-1}\cdot \delta^3x^3+\textit{O}\left( \frac{\delta^2x^3}{(\log x)^{\Delta/2-4}}\right).
\end{aligned}
\end{displaymath}
Note that $\prod_{p\leq y}\left(1-1/p\right)^{-1}\ll \log y \ll_{\Delta} \log\log x$. Taking $\Delta = 2c_0+8$ so that $(\log x)^{-\Delta/2+4}=\textit{o}(\delta\log y )$, we deduce that
\begin{displaymath}
\int_{0}^{1} |S_A(\alpha)|^4 d\alpha \ll_{c_0}  \delta^3 x^3 \log \log x.
\end{displaymath}
By Lemma \ref{L1}, one easily sees that
\begin{displaymath}
|A+A|\gg \delta^4 x^4 I^{-1}\gg_{c_0} \delta x(\log \log x)^{-1}.
\end{displaymath}
Then Theorem \ref{TH} follows.

\medskip

\acknowledgements{\rm We would like to thank Professor O. Ramar\'{e} and K. Matom\"{a}ki for providing a large amount of information and several discussions on this issue. We thank Professor J. Br\"{u}dern and  Professor T. D. Wooley for some helpful conversations. We are grateful to the anonymous referee for pointing out the errors. Last but not least, the third author would like to show great thanks to Ping Xi for his help and encouragement.}

\end{document}